\newtheorem{proposition}{\indent Proposition}
\newtheorem{remark}{\indent Remark}
\def\ve{\varepsilon}
\def\leq{\leqslant}
\def\geq{\geqslant}
\def\*#1{\mathbf{#1}}
\begin{document}
\centerline{\large\textbf{A Semi-Explicit Compact Fourth-Order Finite-Difference Scheme}}
\centerline{\large\textbf{for the General Acoustic Wave Equation}
}
\bigskip\centerline{A. Zlotnik, T. Lomonosov}
\smallskip
\normalsize
\centerline{\textit{
Higher School of Economics University, Moscow, 109028 Pokrovskii bd. 11 Russia}}
\smallskip
\centerline{\it e-mail: azlotnik@hse.ru, tlomonosov@hse.ru}
\smallskip

\begin{abstract}
\noindent We construct a new compact semi-explicit three-level in time fourth-order finite-difference scheme for numerical solving the general multidimensional acoustic wave equation, where both the speed of sound and density of a medium are variable. 
The scheme is three-point in each spatial direction, has the truncation order $\mathcal{O}(|h|^4+h_t^4)$ and is easily implementable.
It seems to be the first compact scheme with such properties for the equation under consideration. 
It generalizes a semi-explicit compact scheme developed and studied recently in the much simpler case of the variable speed of sound only.
Numerical experiments confirm the high precision of the scheme and its fourth error order not only in the mesh $C$ norm but in the mesh $C^1$ norm as well.

\medskip\noindent {\bf Keywords:} acoustic wave equation, variable speed of sound and density, semi-explicit three-level scheme, compact fourth-order scheme, numerical experiments. 
\end{abstract}

\section{\normalsize Introduction}
\label{sec:intro}

The acoustic wave equation with the variable speed of sound $c(x)$ and density $\sigma(x)$ of the medium is important in some physical and engineering applications, for example, see \cite{BG12}.
We construct a new compact semi-explicit three-level in time fourth-order finite-difference scheme for solving such general $n$-dimensional acoustic wave equation, $n\geq 1$.
The scheme is three-point in each spatial direction and has the truncation order $\mathcal{O}(|h|^4+h_t^4)$.
It seems to be the first compact scheme with such properties for the equation under consideration. 

\par Higher-order compact schemes of several types in the much simpler case where only $c(x)$ is variable but $\sigma={\rm const}$ have recently been studied, in particular, see 
\cite{BTT18,CL75,CHFPSH23,DLG14,LLL19,Liao14,LYDH18,STT19,ZC22,ZC23} and references therein. 
A lot of papers on higher-order compact schemes were devoted also to the case of wave equations with constant coefficients that we almost do not touch here.
Some methods of other types to treat numerically the general acoustic wave equation were considered, in particular, see  \cite{BDE21,CFHJSS18,С02,CJ96,LLL23,SWK18} and references therein, but they are beyond the scope of this paper.
Of course, both lists do not pretend to be complete.

\par The new scheme generalizes a semi-explicit compact scheme developed and studied recently in the particular cases of constant $c$ and $\sigma$ and the variable $c=c(x)$ but $\sigma={\rm const}$
\cite{JG20,ZArxiv21,JG23,ZL24,ZL25}. 
The specific feature of the proposed scheme is involving of $n$ auxiliary unknown functions which approximate $n$ summands of the spatial part of the acoustic wave equation in each spatial direction.
In our generalization, an application of the three-point fourth order Samarskii scheme for the second order ordinary differential equation (ODE) in divergent form with a variable coefficient \cite{S63} is essential; this scheme generalizes the well-known Numerov scheme in the case of the constant coefficient.
We also suggest a modification of the Samarskii scheme to ensure better algebraic properties such as the diagonal dominance and positive definiteness for the involved three-point operator connected to the free term in the equation while maintaining  the fourth truncation order.
Notice that our scheme including its initial conditions does not contain derivatives of the free term and initial data of the problem that allows one to apply the scheme in the case where they are nonsmooth like in \cite{ZL25}.

\par The constructed scheme is conditionally stable as any other known higher-order three-level in time compact scheme for the wave-type equations that is three-point in each spatial direction.
The scheme can be easily implemented and requires to solve only independent tridiagonal systems of linear algebraic equations in each spatial direction (that can be accomplished in parallel).

\par We present results of the 2D numerical experiments that confirm the high precision of the scheme even for rough meshes and its fourth error order not only in the mesh $C$ (i.e., uniform) norm but in the mesh $C^1$ norm as well.
Such properties in the latter norm are important for accurate uniform computation of some additional physical quantities but have previously not been analyzed. 
We consider examples with smooth $\sigma(x)$ and $c(x)$ and with $\sigma(x)$ and $c(x)$ having a smoothed jump very steep in the case of $c(x)$. 
Note that, in our computations, we observe the possibility of using larger Courant numbers with respect to the variable $\sigma$ than those predicted theoretically.
In addition, we include a study of the acoustic wave propagation in the three-layer-type medium, with $\sigma(x)$ and $c(x)$ having steep smoothed jumps,  generated by a Ricker-type wavelet source function smoothed in space. 
The numerical results contain expanding wave and internal reflected waves and are close to those in \cite{HLZ19,ZC23,ZL24} concerning a similar example in the case of discontinuous $c(x)$ and $\sigma\equiv 1$.

\par The paper is organized as follows. 
In Section \ref{sec:ibvp and scheme}, an initial-boundary value problem for the general multidimensional acoustic wave equation is formulated, and the several versions of the semi-explicit compact fourth order scheme to solve it are constructed.
Two propositions concerning the fourth order truncation error and the algebraic properties of a generalized Numerov operator are included as well.
A discussion of the stability condition is added too.
Section \ref{sec:numer experim} is devoted to three 2D numerical experiments.

\section{\normalsize An initial-boundary value problem for the general acoustic wave equation and the semi-explicit compact fourth-order scheme}
\label{sec:ibvp and scheme} 
\setcounter{equation}{0}
\setcounter{proposition}{0}
\setcounter{theorem}{0}
\setcounter{remark}{0}
\setcounter{corollary}{0}

We formulate the initial-boundary value problem (IBVP) for the $n$-dimensional general acoustic wave equation
\begin{gather}
\beta\partial_t^2u=L(\sigma)u+f(x,t),\ \ (x,t)\in Q=Q_T=\Omega\times (0,T),
\label{eq}\\
u(x,t)=g(x,t),\ \ (x,t)\in\Gamma_T=\partial\Omega\times (0,T),
\label{bc}\\  u|_{t=0}=u_0(x),\ \ 
\partial_tu|_{t=0}=u_1(x),\ \  x\in \Omega:=(0,X_1)\times\ldots\times(0,X_n)
\label{ic}
\end{gather}
under the nonhomogeneous Dirichlet boundary condition, $n\geq 1$.
Here 
\[
\beta=\frac{1}{\sigma c^2},\ \ 
L(\sigma )=L_1(\sigma )+\ldots+L_n(\sigma ),\ \ L_k(\sigma)u:=\partial_k\Big(\frac{1}{\sigma}\partial_k u\Big),\ \ 1\leq k\leq n,
\]  
where $\sigma=\sigma (x)>0$ and $c=c(x)>0$ on $\bar{\Omega}$  are the density and speed of sound of the stationary medium, and $f(x,t)$ is a given source function;
recall also that $u$ is the pressure, for example, see \cite{BG12}.
We cover not only the standard cases $n=1,2,3$ since, in some problems in theoretical physics, wave equations for higher $n\geq 4$ are also of interest (for example, see \cite{F17}). 

\par We consider smooth solutions $u$ and reformulate the acoustic wave equation \eqref{eq} as the following system of equations containing only one second order derivative in time or space
\begin{gather}
\beta\partial_t^2u=u_1+\ldots+u_n+f,
\label{eq 1}\\
u_k=L_k(\sigma )u,\ \ 1\leq k\leq n.
\label{eq 2}
\end{gather}
Applying $\partial_t^2$ to the acoustic wave equation \eqref{eq} and using equation \eqref{eq 1}, we get
\begin{gather}
\beta\partial_t^4u=L(\sigma )
\partial_t^2u+\partial_t^2f=L(\sigma )
\Big[\frac{1}{\beta}(u_1+\ldots+u_n+f)\Big]+\partial_t^2f
\label{form for dt4}.
\end{gather}

\par Let $\bar{\omega}^{h_t}$ be the uniform mesh on $[0,T]$ with the nodes $t_m=mh_t$, $0\leq m\leq M$, and the step $h_t=T/M$, $M\geq 2$.
Let ${\omega}^{h_t}=\bar{\omega}^{h_t}\backslash\{0,T\}$ as well as $y^m=y(t_m)$, $\hat{y}^m=y^{m+1}$ and $\check{y}^m=y^{m-1}$.
Define the difference operators in $t$ 
\[
\delta_ty=\frac{\hat{y}-y}{h_t},\ \ 
\bar{\delta}_ty=\frac{y-\check{y}}{h_t},\ \
\Lambda_ty=\delta_t\bar{\delta}_ty=\frac{\hat{y}-2y+\check{y}}{h_t^2}.
\]

\par Applying the well-known expansion of $\Lambda_tu$, equation \eqref{eq 1} and formula \eqref{form for dt4}, we obtain
\begin{gather}
\Lambda_tu=\partial_t^2y+\frac{h_t^2}{12}\partial_t^4y+\mathcal{O}(h_t^4)
 =\frac{1}{\beta}(u_1+\ldots+u_n+f)
\nonumber\\
 +\frac{1}{\beta}\frac{h_t^2}{12}L(\sigma )
 \Big[\frac{1}{\beta}(u_1+\ldots+u_n+f)\Big]+\frac{h_t^2}{12}\frac{1}{\beta}\Lambda_tf+\mathcal{O}(h_t^4)
\label{eq with Lambda t} 
\end{gather}
on $\omega_{h_t}$, where $\partial_t^2f$ has been replaced with $\Lambda_tf$ with the reminder of the same order to avoid usage of derivatives of $f$ in $t$.

\par Let $y_0=y|_{t=0}$ for any function $y=y(t)$ and $I$ be the identity  operator.
Applying the Taylor formula at $t=0$, equations \eqref{eq}, \eqref{eq 1} and \eqref{eq 2} and the initial conditions \eqref{ic}, we obtain
\begin{gather}
 (\delta_tu)^0=u_1+\frac{h_t}{2}(\partial_t^2u)_0+\frac{h_t^2}{6}(\partial_t^3u)_0+\frac{h_t^3}{24}(\partial_t^4u)_0+\mathcal{O}(h_t^4)
=u_1+\frac{h_t}{2}\frac{1}{\beta}(u_{10}+\ldots+u_{n0}+f_0)
\nonumber\\
+\frac{h_t^2}{6}\frac{1}{\beta}\big(L(\sigma )u_1+(\partial_tf)_0\big)
+\frac{h_t^3}{24}\frac{1}{\beta}\Big\{L(\sigma )\Big[\frac{1}{\beta}(u_{10}+\ldots+u_{n0}+f_0)\Big]+(\partial_t^2f)_0\Big\}+\mathcal{O}(h_t^4)
\nonumber\\
=u_1+\frac{h_t^2}{6}\frac{1}{\beta}L(\sigma )u_1
+\frac{h_t}{2}\Big\{
\Big(I+\frac{h_t^2}{12}\frac{1}{\beta}L(\sigma )\Big)\Big[\frac{1}{\beta}(u_{10}+\ldots+u_{n0}+f_0)\Big]
+\frac23\frac{1}{\beta}\big(f|_{t=\frac{h_t}{2}}-f_0\big)\Big\}
\nonumber\\
+\mathcal{O}(h_t^4),
\label{2nd ini cond}
\end{gather}
where the derivatives of $f$ in $t$ have been excluded using the following formula 
from \cite{ZK21}:
\[
\frac23\big(f|_{t=\frac{h_t}{2}}-f_0\big)=\frac{h_t}{
3}(\partial_tf)_0 + \frac{h_t^2}{12}
(\partial_t^2f)_0+\mathcal{O}(h_t^3).
\] 

\par Let $\bar{\omega}_{hk}$ be the uniform mesh in $x_k$ on $[0,X_k]$ with the nodes $x_{ki}=ih_k$, $0\leq i\leq N_k$, $N_k\geq 2$, and the step $h_k=X_k/N_k$.
Let $\omega_{hk}=\bar{\omega}_{hk}\backslash\{0,X_k\}$.

\par Introduce the rectangular mesh
$\bar{\omega}_{\*h}=\bar{\omega}_{h1}\times\ldots\times\bar{\omega}_{hn}$ in $\bar{\Omega}$, with the nodes $x_{\*i}=(x_{1i_1},\ldots,x_{ni_n})$ $=x_{1i_1}\*e_1+\ldots+x_{ni_n}\*e_n$, where $\*h=(h_1,\ldots,h_n)$, $\*i=(i_1,\ldots,i_n)$ and $\*e_1,\ldots,\*e_n$ is the canonical basis in $\mathbb{R}^n$. 
Let ${\omega}_{\*h}={\omega}_{h1}\times\ldots\times{\omega}_{hn}$ and $\partial\omega_{\*h}=\bar{\omega}_{\*h}\backslash{\omega}_{\*h}$ be the corresponding meshes in $\Omega$ and on $\partial\Omega$ as well as $w_{\*i}=w(x_{\*i})$ and $w_{\*i-0.5\*e_k}=w(x_{\*i}-0.5h_k\*e_k)$.

\par Let $k=1,\ldots,n$. 
We define the two difference operators
\[
\Lambda_k(\sigma )w_{\*i}=\frac{1}{h_k}\Big(\frac{w_{\*i+\*e_k}-w_{\*i}}{\hat{\sigma }_{\*i+0.5\*e_k}h_k}-\frac{w_{\*i}-w_{\*i-\*e_k}}{\hat{\sigma }_{\*i-0.5\*e_k}h_k}\Big),\ \ s_k(\sigma )w_{\*i}:=\frac{1}{\sigma _{\*i}^{(k)}}w_{\*i}+\frac{h_k^2}{12}\Lambda_k(\sigma )w_{\*i}.
\]
Clearly, the more explicit formula holds
\begin{gather}
s_k(\sigma )w_{\*i}=
\alpha_{\*i}^{(k)}w_{\*i-\*e_k}
+\beta_{\*i}^{(k)}w_{\*i}+\alpha_{\*i+\*e_k}^{(k)}w_{\*i+\*e_k},
\label{oper sk 1}
\end{gather}
with the coefficients
\begin{gather}\alpha_{\*i}^{(k)}=\frac{1}{12\hat{\sigma }_{\*i-0.5\*e_k}},\ \ 
\beta_{\*i}^{(k)}=\frac{1}{\sigma _{\*i}^{(k)}}-\alpha_{\*i}^{(k)}-\alpha_{\*i+\*e_k}^{(k)}.
\label{oper sk 2}
\end{gather}
Here $\hat{\sigma }_{\*i-0.5\*e_k}=\hat{\sigma }_{I(\*i-0.5\*e_k)}$ is the following mean value for $k=1,\ldots,n$, respectively,
\begin{gather}
\hat{\sigma }_{I(\*i-0.5\*e_1)}=\frac{1}{h_1}\int_{x_{1(i_1-1)}}^{x_{1i_1}}
\sigma (x_1,x_{\*i(1)})\,dx_1,\ldots,
\hat{\sigma }_{I(\*i-0.5\*e_n)}=\frac{1}{h_n}\int_{x_{n(i_n-1)}}^{x_{ni_n}}
\sigma (x_{\*i(n)},x_n)\,dx_n
\label{d mean value}
\end{gather}
with $x_{\*i(1)}=(x_{2i_2},\ldots,x_{ni_n}),\ldots,
x_{\*i(n)}=(x_{1i_1},\ldots,x_{(n-1)i_{n-1}})$,
or $\hat{\sigma }_{\*i-0.5\*e_k}=\hat{\sigma }_{S(\*i-0.5\*e_k)}$ or $\hat{\sigma }_{G(\*i-0.5\*e_k)}$ are the related fourth-order Simpson and Gauss (with two nodes) scaled quadrature formulas
\begin{gather}
 \hat{\sigma }_{S(\*i-0.5\*e_k)}:=\frac{1}{6}\big(\sigma (x_{\*i-\*e_k})+4\sigma (x_{\*i-0.5\*e_k})+\sigma (x_{\*i})\big)=\hat{\sigma }_{I(\*i-0.5\*e_k)}+\mathcal{O}(h_k^4),
\label{d simpson}\\
 \hat{\sigma }_{G(\*i-0.5\*e_k)}:= 
 \frac12\big(\sigma (x_{\*i-0.5\*e_k}-\theta_G h_k\*e_k)+\sigma (x_{\*i-0.5\*e_k}+\theta_G h_k\*e_k)\big)=\hat{\sigma }_{I(\*i-0.5\*e_k)}+\mathcal{O}(h_k^4)
\label{d gauss}
\end{gather}
with $\theta_G=\frac{1}{2\sqrt{3}}$.
We also consider two cases
\begin{gather}
\sigma_{\*i}^{(k)}=\sigma _{\*i},\ \ 
\sigma_{\*i}^{(k)}=\tilde{\sigma }_{\*i}^{(k)}:=\Big[\frac12\Big(\frac{1}{\hat{\sigma }_{\*i-0.5\*e_k}}+\frac{1}{\hat{\sigma }_{\*i+0.5\*e_k}}\Big)\Big]^{-1}.
\label{sigmas}
\end{gather}
We comment on the respective properties of the operator $s_k$ in Proposition  \ref{lem:about s_k} below.

\par Let $\Lambda(\sigma )=\Lambda_1(\sigma )+\ldots+\Lambda_n(\sigma )$. For functions $w(x)$ and $\sigma (x)$ smooth in $x_k$ on $\bar{\Omega}$ and $1\leq k\leq n$, the following truncation errors hold
\begin{gather}
L_k(\sigma )w-\Lambda_k(\sigma )w=\mathcal{O}(h_k^2), 
\label{trunc err 1}\\
\Lambda_k(\sigma )w-s_k(\sigma )(\sigma ^{(k)}w)=\mathcal{O}(h_k^4)
\label{trunc err 2}
\end{gather}
on $\omega_{\*h}$.
Here, for $\sigma ^{(k)}=\tilde{\sigma }^{(k)}$, we assume that $\sigma (x)$ is given and smooth in $x_k$ on $\bar{\Omega}^{(k)}$ that enlarges $\bar{\Omega}$ by replacing $[0,X_k]$ with $[-X_k,2X_k]$.
Formula \eqref{trunc err 1} is well-known, for example, see \cite{SA79};
concerning formula  \eqref{trunc err 2}, see Proposition \ref{lem:4th order for s_k} below.
Then we can pass from formula \eqref {eq with Lambda t} and equation \eqref{eq 2} to 
\begin{gather}
\Lambda_tu
 =\Big(I+\frac{1}{\beta}\frac{h_t^2}{12}\Lambda(\sigma)
 \Big)\Big[\frac{1}{\beta}(u_1+\ldots+u_n+f)\Big]+\frac{1}{\beta}\frac{h_t^2}{12}\Lambda_tf+\mathcal{O}(|\*h|^4+h_t^4),
\label{eq 1 with Lambda's}\\ 
s_k(\sigma )(\sigma ^{(k)}u_k)=\Lambda_k(\sigma )u+\mathcal{O}(h_k^4),\ \ 1\leq k\leq n, 
\label{eq 2 with Lambda's}
\end{gather}
on $\omega_{\*h}\times\omega_{h_t}$
and $\omega_{\*h}\times\bar{\omega}_{h_t}$, respectively.

\par We omit the remainders in formulas \eqref{eq 1 with Lambda's}--\eqref{eq 2 with Lambda's} and consider the main approximate solution $v\approx u$ and auxiliary functions $v_1\approx u_1,\ldots,v_n\approx u_n$ defined on $\bar{\omega}_{\*h}\times\bar{\omega}_{h_t}$ and satisfying the equations
\begin{gather}
\Lambda_tv
 =\Big(I+\frac{1}{\beta}\frac{h_t^2}{12}\Lambda(\sigma)
 \Big)\Big[\frac{1}{\beta}(v_1+\ldots+v_n+f)\Big]+\frac{1}{\beta}\frac{h_t^2}{12}\Lambda_tf,
\label{eq 1 app}\\ 
  s_k(\sigma )(\sigma ^{(k)}v_k)= \Lambda_k(\sigma )v
\label{eq 2 app}
\end{gather}
both valid on $\omega_{\*h}\times{\omega}_{h_t}$.
Here clearly $\frac{h_t^2}{12}\Lambda_tf=\frac{1}{12}(\hat{f}-2f+\check{f})$.
We supplement these equations with the boundary conditions
\begin{gather}
v|_{\partial\omega_{\*h}}=g,\ \ 
v_k|_{\partial\omega_{\*h}}=g_k,\ \ 1\leq k\leq n,
\label{bc approx}
\end{gather}
where in accordance with the acoustic wave equation \eqref{eq} and the boundary condition \eqref{bc} we have
\begin{gather*}
 g_k:=
\begin{cases} 
 \beta\partial_t^2g-\sum_{1\leq l\leq n,\,l\neq k}L_l(\sigma )g-f\ \ \text{for}\ \ x_k=0,X_k,
\\[1mm] 
 L_k(\sigma )g\ \ \text{for}\ \ x_l=0,X_l,\ \ 1\leq l\leq n,\ \ l\neq k.  
\end{cases}
\end{gather*}

\par Formulas \eqref{eq 1 with Lambda's} and \eqref{eq 2 with Lambda's} demonstrate that the truncation errors of equations \eqref{eq 1 app} and \eqref{eq 2 app} are of the fourth orders $\mathcal{O}(|\*h|^4+h_t^4)$ and $\mathcal{O}(h_k^4)$;
the truncation error of the boundary conditions \eqref{bc approx} equals 0.

\par Using formula \eqref{trunc err 1} in expansion \eqref{2nd ini cond} as well, omitting the arising reminder $\mathcal{O}(|\*h|^4+h_t^4)$ and considering equation \eqref{eq 2 app} for $m=0$, 
we obtain the initial conditions for the scheme
\begin{gather}
v^0=u_0\ \ \text{on}\ \ \bar{\omega}_{\*h},
\label{1nd ini cond appr}\\
 (\delta_tv)^0
=u_1+\frac{h_t^2}{6}\frac{1}{\beta}\Lambda(\sigma )u_1
+\frac{h_t}{2}\Big\{
\Big(I+\frac{h_t^2}{12}\frac{1}{\beta}\Lambda(\sigma )\Big)\Big[\frac{1}{\beta}(v_1^0+\ldots+v_n^0+f_0)\Big]
\nonumber\\
+\frac23\frac{1}{\beta}\big(f|_{t=\frac{h_t}{2}}-f_0\big)\Big\}\ \ \text{on}\ \ {\omega}_{\*h},
\label{2nd ini cond appr}\\
s_k(\sigma )(\sigma ^{(k)}v_{k}^0)=\Lambda_k(\sigma )u_0,\ \ 1\leq k\leq n,\ \ \text{on}\ \ {\omega}_{\*h}. 
\label{2nd ini cond appr 2}
\end{gather}
We emphasize that these initial conditions do not contain derivatives of the data of the IBVP that allows one to apply them for nonsmooth data like in \cite{ZL25}. Similarly to equations \eqref{eq 1 app} and \eqref{eq 2 app}, the truncation errors of equations \eqref{2nd ini cond appr} and \eqref{2nd ini cond appr 2} are of the fourth orders $\mathcal{O}(|\*h|^4+h_t^4)$ and $\mathcal{O}(h_k^4)$.

\par The constructed scheme can be implemented easily.
For each $k=1,\ldots,n$, equations \eqref{2nd ini cond appr 2}
and \eqref{eq 2 app} together with the boundary conditions $(v_k^m-g_k^m)_{\*i}|_{i_k=0,N_k}=0$ lead to tridiagonal systems of linear algebraic equations for $\sigma ^{(k)}v_k^m$ in the direction $x_k$ and for time levels $m=0$ and $m=1,\ldots,M-1$ (the values for $m=M$ are not in use) except for the given values $v_{k,\*i}^m=g_{k,\*i}^m$ at the nodes on the facets (sides for $n=2$) $x_l=0,X_l$, $1\leq l\leq n$, $l\neq k$ of $\bar{\Omega}$.
The values of $v_k$ at the nodes on the edges (at the vertices for $n=2$) of $\bar{\Omega}$ are not in use.
Since $v^1=v^0+h_t(\delta_t v)^0$ and $\hat{v}=2v-\check{v}+h_t^2\Lambda_tv$, equations \eqref{2nd ini cond appr} and \eqref{eq 1 app} lead to explicit formulas for $v^{m+1}$ on $\omega_{\*h}$ for time levels $m=0$ and $m=1,\ldots,M-1$ provided that $\frac{1}{\beta}(v_1^m+\ldots+v_n^m)$ is already found.
\begin{remark}
 For some applications (including possible change of variables), the case of more general acoustic wave equation \eqref{eq} is of interest, with the operators $L(\sigma )$ and $L_k(\sigma )$ replaced  with $L(\*\sigma )=L_1(\sigma _1)+\ldots+L_n(\sigma _n)$ and $L_k(\sigma _k)$, where $\*\sigma =(\sigma _1,\ldots,\sigma _n)$ and $\sigma _k(x)>0$ on $\bar{\Omega}$, $1\leq k\leq n$. 
 The constructed compact scheme is generalized automatically to this case, with the mesh operators $\Lambda(\sigma )$, $\Lambda_k(\sigma )$ and $s_k(\sigma )$ replaced with $\Lambda(\*\sigma )=\Lambda_1(\sigma _1)+\ldots+\Lambda_n(\sigma _n)$, $\Lambda_k(\sigma _k)$ and $s_k(\sigma _k)$, respectively, as well as well as $\sigma ^{(k)}$ replaced with $\sigma _k^{(k)}$ in equations \eqref{eq 2 app} and \eqref{2nd ini cond appr 2}, $1\leq k\leq n$. 
\end{remark}

\begin{proposition} 
\label{lem:4th order for s_k}
Formula \eqref{trunc err 2} is valid, where, in the case $\sigma ^{(k)}=\tilde{\sigma }^{(k)}$, it is assumed that $\sigma (x)$ is given and smooth in $x_k$ on $\bar{\Omega}^{(k)}$.
\end{proposition}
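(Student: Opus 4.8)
The plan is to read \eqref{trunc err 2} in the form in which it is actually used to produce \eqref{eq 2 with Lambda's}: setting $q:=L_k(\sigma)w$ for the given smooth $w$, the identity behind \eqref{eq 2 with Lambda's} to be established is $\Lambda_k(\sigma)w=s_k(\sigma)(\sigma^{(k)}q)+\mathcal{O}(h_k^4)$ on $\omega_{\*h}$. Since $\Lambda_k(\sigma)$, $s_k(\sigma)$ and $L_k(\sigma)$ act only along $x_k$, I would freeze the remaining coordinates and argue in one variable, writing $p:=\tfrac1\sigma\partial_k w$ for the exact flux, so that $p'=L_k(\sigma)w=q$. Before the main computation I would make two reductions. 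First, by \eqref{d simpson}--\eqref{d gauss} the Simpson and Gauss weights equal the integral mean $\hat\sigma_{I}$ up to an $\mathcal{O}(h_k^4)$ term that is smooth in the cell midpoint; as these weights enter only through $1/\hat\sigma_{\*i\pm0.5\*e_k}$, the induced perturbation of each flux is $\mathcal{O}(h_k^4)$, and after the outer divided difference $\tfrac1{h_k}(\cdot_{+}-\cdot_{-})$ so is the perturbation of both sides, so it suffices to treat $\hat\sigma=\hat\sigma_I$. Second, by \eqref{sigmas} the two choices of $\sigma^{(k)}$ differ by a smooth $\mathcal{O}(h_k^2)$ amount, while the first term $\tfrac{1}{\sigma^{(k)}}(\sigma^{(k)}q)=q$ of $s_k$ is independent of the choice; only the $\tfrac{h_k^2}{12}\Lambda_k(\sigma)$ term feels the change, and there the argument is altered by a smooth $\mathcal{O}(h_k^2)$ function, whose image under $\Lambda_k(\sigma)$ is $\mathcal{O}(h_k^2)$, giving $\mathcal{O}(h_k^4)$ overall. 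So I may prove the identity for $\hat\sigma_I$ and, say, $\sigma^{(k)}=\sigma$.

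The core step is an exact flux representation. Using $\hat\sigma_{\*i+0.5\*e_k}h_k=\int_{x_{ki_k}}^{x_{k(i_k+1)}}\sigma\,dx_k$ and $w_{\*i+\*e_k}-w_{\*i}=\int\partial_k w\,dx_k=\int\sigma p\,dx_k$, the discrete flux $P_{\*i+0.5\*e_k}:=(w_{\*i+\*e_k}-w_{\*i})/(\hat\sigma_{\*i+0.5\*e_k}h_k)$ is exactly the $\sigma$-weighted cell average $\big(\int\sigma p\big)\big/\big(\int\sigma\big)$ of $p$. Expanding numerator and denominator about the midpoint (with $s$ the variable centered there, odd moments vanish and $\int_{-h_k/2}^{h_k/2}s^2\,ds=\tfrac{h_k^3}{12}$), I would obtain $P_{\*i+0.5\*e_k}=p+\tfrac{h_k^2}{24}\big(p''+2\tfrac{\sigma'}{\sigma}p'\big)+\mathcal{O}(h_k^4)$ at the midpoint. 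Forming $\tfrac1{h_k}(P_{\*i+0.5\*e_k}-P_{\*i-0.5\*e_k})$ and expanding about $x_{\*i}$ then gives $\Lambda_k(\sigma)w=p'+\tfrac{h_k^2}{12}\big[p'''+(\tfrac{\sigma'}{\sigma}p')'\big]+\mathcal{O}(h_k^4)$. The decisive algebraic identity is that, with $p'=q$, the bracket equals $L_k(\sigma)(\sigma q)=\big(\tfrac1\sigma(\sigma q)'\big)'$; indeed $q''+(\tfrac{\sigma'}\sigma)'q+\tfrac{\sigma'}\sigma q'=\partial_k\big(\tfrac1\sigma\partial_k(\sigma q)\big)$. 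Hence $\Lambda_k(\sigma)w=q+\tfrac{h_k^2}{12}L_k(\sigma)(\sigma q)+\mathcal{O}(h_k^4)$ at $x_{\*i}$.

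It remains to expand the right-hand side. By the definition of $s_k$ in \eqref{oper sk 1}--\eqref{oper sk 2} and the cancellation $\tfrac1{\sigma_{\*i}}(\sigma q)_{\*i}=q_{\*i}$ one has $s_k(\sigma)(\sigma q)_{\*i}=q_{\*i}+\tfrac{h_k^2}{12}\Lambda_k(\sigma)(\sigma q)_{\*i}$, and \eqref{trunc err 1} applied to $\sigma q$ gives $\Lambda_k(\sigma)(\sigma q)_{\*i}=L_k(\sigma)(\sigma q)(x_{\*i})+\mathcal{O}(h_k^2)$; multiplied by $\tfrac{h_k^2}{12}$ this error is $\mathcal{O}(h_k^4)$. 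Subtracting the two expansions, the $\mathcal{O}(1)$ terms $q_{\*i}$ and the $\mathcal{O}(h_k^2)$ terms $\tfrac{h_k^2}{12}L_k(\sigma)(\sigma q)$ cancel, leaving $\mathcal{O}(h_k^4)$, which is \eqref{trunc err 2}. For $\sigma^{(k)}=\tilde\sigma^{(k)}$ the same conclusion follows from the second reduction; the evaluation of $\tilde\sigma^{(k)}$ at the nodes adjacent to $\partial\Omega$ is exactly what forces the smoothness of $\sigma$ on the enlarged domain $\bar\Omega^{(k)}$.

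The main obstacle is the core expansion together with the algebraic identity for the $\mathcal{O}(h_k^2)$ correction: it is precisely here that the integral (harmonic-type) mean $\hat\sigma_I$ is indispensable, since it is the weight for which the discrete flux is exact when the flux is constant ($q\equiv0$), and this is what lets the correction collapse to $\tfrac{h_k^2}{12}L_k(\sigma)(\sigma q)$ and match the $s_k$ side. A brute-force Taylor expansion of \eqref{oper sk 1}--\eqref{oper sk 2} would also work but obscures this mechanism; the flux representation keeps the bookkeeping of the variable-coefficient terms under control.
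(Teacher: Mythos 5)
Your proof is correct, and its skeleton --- reduce to one spatial dimension, establish the fourth-order identity for one choice of $\hat\sigma$ and for $\sigma^{(k)}=\sigma$, then absorb the remaining choices as perturbations --- matches the paper's. The substantive difference is in the core estimate: the paper does not prove it but quotes the known truncation property of Samarskii's scheme \eqref{samara scheme} from \cite{S63,SA79} (stated for $\hat\sigma_S$, with the remark that the proof extends to $\hat\sigma_I$ and $\hat\sigma_G$), whereas you derive it from scratch via the exact representation of the discrete flux as the $\sigma$-weighted cell average of $p=\frac{1}{\sigma}\partial_k w$ together with the identity $p'''+\big(\frac{\sigma'}{\sigma}p'\big)'=L_k(\sigma)(\sigma q)$ with $q=p'$; I checked the $\frac{h_k^2}{24}$ bookkeeping and it closes correctly. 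Your route is self-contained and isolates why the integral mean is the natural weight; its price is the extra reduction of $\hat\sigma_S$ and $\hat\sigma_G$ to $\hat\sigma_I$, where your observation that the $\mathcal{O}(h_k^4)$ quadrature discrepancy is a \emph{smooth} function of the cell midpoint is genuinely needed --- a raw $\mathcal{O}(h_k^4)$ perturbation of the two fluxes would only yield $\mathcal{O}(h_k^3)$ after the outer divided difference. For $\sigma^{(k)}=\tilde\sigma^{(k)}$ you use the same perturbation idea as the paper's Step 2; the paper makes the ``smooth $\mathcal{O}(h_k^2)$'' claim precise by computing $\tilde\sigma-\sigma=\frac{h_k^2}{4}\big(\frac12\sigma''-(\sigma')^2/\sigma\big)+\mathcal{O}(h_k^4)$ and then bounding $h_k^2\Lambda_k(\sigma)(rf)$ through second differences of $r$, so you should write out that short Taylor expansion to fully close this step. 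Finally, both you and the paper read \eqref{trunc err 2} in the only way consistent with its use in \eqref{eq 2 with Lambda's}, namely with $L_k(\sigma)w$ rather than $w$ inserted into $s_k(\sigma)(\sigma^{(k)}\,\cdot\,)$; your explicit statement of this reading is a point in your favor.
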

\begin{proof}
\par 1. It is sufficient to consider the 1d case ($n=1$). For the ODE $L_1(\sigma )w=f(x)$ on $(0,X_1)$, the scheme 
\begin{gather}
\Lambda_1(\sigma )v=f+\frac{h_1^2}{12}\Lambda_1(\sigma )(\sigma f)\ \ \text{on}\ \ \omega_{1h},
\label{samara scheme}
\end{gather}
with $\hat{\sigma }=\hat{\sigma }_S$ was suggested in \cite{S63}, where
its fourth order truncation error $\Lambda_1(\sigma )w-f-\frac{h_1^2}{12}\Lambda_1(\sigma )(\sigma f)=\mathcal{O}(h_1^4)$ was proved (see also \cite{SA79}). The proof remains valid for $\hat{\sigma }=\hat{\sigma }_I$ and $\hat{\sigma }_G$ as well.
This justifies formula  \eqref{trunc err 2} in the case $\sigma ^{(k)}=\sigma $.

\par 2. Consequently, in the case $\sigma ^{(k)}=\tilde{\sigma }^{(k)}$, it is sufficient to prove that $r:=\tilde{\sigma }-\sigma $ satisfies the bound
\begin{gather}
 h^2\Lambda_1(\sigma )(rf)=\mathcal{O}(h^4)\ \ \text{on}\ \ \omega_{1h}. 
\label{aux est 1}
\end{gather}
Using the Taylor formula $f_{i\pm 1}=f_i\pm h f_i'+\mathcal{O}(h^2)$, we have
\[
  h^2\Lambda_1(\sigma )(rf)_i=h^2(\Lambda_1(\sigma )r)_if_i
  +\Big(\frac{r_{i+1}}{\hat{\sigma }_{i+0.5}}-\frac{r_{i-1}}{\hat{\sigma }_{i-0.5}}\Big)hf_i'+(|r_{i-1}|+|r_{i+1}|)\mathcal{O}(h^2).
\]
Since $\hat{\sigma }_{i\pm 0.5}=\sigma _i+\mathcal{O}(h)$, we further get
\begin{gather}
  h^2\Lambda_1(\sigma )(rf)_i
\nonumber\\
=\mathcal{O}\big(|r_{i+1}-2r_i+r_{i-1}|+h(|r_{i+1}-r_i|+|r_i-r_{i-1}|)+h^2(|r_{i-1}|+|r_i|+|r_{i+1}|)\big).
\label{aux exp}
\end {gather}
Using the Taylor formula 
\[
\hat{\sigma }_{i\pm 0.5}=\sigma _i+\frac12\Big(\frac{h}{2}\Big)^2\sigma _i''\pm\Big(\frac{h}{2}\sigma _i'+\frac16\Big(\frac{h}{2}\Big)^3\sigma _i'''\Big)+\mathcal{O}(h^4)
\]
for $\sigma \in C^4[-X_1,2X_1]$, we obtain
\[
r_i=\frac{\hat{\sigma }_{i-0.5}\hat{\sigma }_{i+0.5}}{\frac12(\hat{\sigma }_{i-0.5}+\hat{\sigma }_{i+0.5})}-\sigma _i
 =\frac{\sigma _i^2+\frac{h^2}{4}\big(\sigma _i\sigma _i''-(\sigma _i')^2\big)+\mathcal{O}(h^4)}
 {\sigma _i+\frac{h^2}{8}\sigma _i''+\mathcal{O}(h^4)}-\sigma _i=\frac{h^2}{4}\Big(\frac12\sigma _i''-\frac{(\sigma _i')^2}{\sigma _i}\Big)+\mathcal{O}(h^4)
\]
on $\bar{\omega}_h$.
Inserting this expansion into formula \eqref{aux exp}, we derive bound  \eqref{aux est 1}.   
\end{proof}
Note that, in the case $\sigma (x)={\rm const}$, scheme \eqref{samara scheme} is reduced to the well-known Numerov scheme
\[\frac{v_{i+1}-2v_i+v_{i-1}}{\sigma h^2}=s_Nf_i:=\frac{1}{12}(f_{i-1}+10f_i+f_{i+1}).
\]
\par Define the Euclidean space $H_{\*h}$ of functions $w$ given on $\bar{\omega}_{\*h}$, with $w|_{\partial\omega_{\*h}}=0$, endowed with the inner product $(w,z)_{H_{\*h}}=\sum_{x_{\*i}\in{\omega}_{\*h}}w_{\*i}z_{\*i}h_1\ldots h_n$.
\begin{proposition} 
\label{lem:about s_k}
Let $1\leq k\leq n$. The operator $s_k$ is self-adjoint in $H_{\*h}$.
\par 1. For $\sigma ^{(k)}=\sigma $, the operator $s_k$ is non-singular in $H_{\*h}$ provided that 
\begin{equation}
\frac{2-\delta_{i_k,1}}{12}\frac{\sigma _{\*i}}{\hat{\sigma }_{\*i-0.5\*e_k}}
 +\frac{2-\delta_{i_k,N_k-1}}{12}\frac{\sigma _{\*i}}{\hat{\sigma }_{\*i+0.5\*e_k}}\leq 1\ \ \text{for any}\ \ x_{\*i}\in\omega_{\*h},
\label{cond for pos def}
\end{equation}
and the inequality is strict for at least one value of $i_k=1,\ldots,N_k-1$,
where $\delta_{i,j}$ is the Kronecker symbol.
\par 2. For $\sigma ^{(k)}=\tilde{\sigma }^{(k)}$, we have $\beta_{\*i}^{(k)}=5\big(\alpha_{\*i}^{(k)}+\alpha_{\*i+\*e_k}^{(k)}\big)$ in \eqref{oper sk 2}, and consequently the operator $s_k$ is positive definite (thus, non-singular) in $H_{\*h}$.
\end{proposition}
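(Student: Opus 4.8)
The plan is to exploit the explicit tridiagonal form \eqref{oper sk 1}--\eqref{oper sk 2} of $s_k$ together with the fact that $s_k$ shifts only along the $x_k$-axis: for each fixed choice of the transverse indices it acts on one line $i_k=0,\ldots,N_k$, and since functions in $H_{\*h}$ vanish on $\partial\omega_{\*h}$, on $H_{\*h}$ the operator $s_k$ decouples into independent one-dimensional tridiagonal blocks, one per line. All three claims then reduce to elementary facts about these blocks. For self-adjointness I would note that the coefficient coupling the neighbouring nodes $\*i$ and $\*i+\*e_k$ is the same whether read off from $s_k w_{\*i}$ (the coefficient of $w_{\*i+\*e_k}$, namely $\alpha_{\*i+\*e_k}^{(k)}$) or from $s_k w_{\*i+\*e_k}$ (the coefficient of $w_{(\*i+\*e_k)-\*e_k}=w_{\*i}$, namely $\alpha_{\*i+\*e_k}^{(k)}$ again, by \eqref{oper sk 2}). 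Because the weights $h_1\cdots h_n$ of $(\cdot,\cdot)_{H_{\*h}}$ are uniform, each block is a symmetric matrix and $s_k$ is self-adjoint. Equivalently, summation by parts along each line shows that $-\Lambda_k(\sigma)$ is a symmetric non-negative form while $\tfrac{1}{\sigma^{(k)}}I$ is trivially symmetric.

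For the first claim I would recast \eqref{cond for pos def} as row-wise diagonal dominance of each block. Since $\hat\sigma>0$, the off-diagonal entries $\alpha_{\*i}^{(k)},\alpha_{\*i+\*e_k}^{(k)}$ are strictly positive, so every block is irreducible. For an interior row $2\leq i_k\leq N_k-2$ both off-diagonals are present, and inserting $\beta_{\*i}^{(k)}=\sigma_{\*i}^{-1}-\alpha_{\*i}^{(k)}-\alpha_{\*i+\*e_k}^{(k)}$ into the dominance inequality $\beta_{\*i}^{(k)}\geq\alpha_{\*i}^{(k)}+\alpha_{\*i+\*e_k}^{(k)}$ reproduces exactly \eqref{cond for pos def} with both Kronecker symbols equal to $0$. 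For the boundary rows $i_k=1$ and $i_k=N_k-1$ one neighbour lies on $\partial\omega_{\*h}$ and is absent from the restricted block, so only one off-diagonal survives and the one-sided dominance inequality is precisely \eqref{cond for pos def} with $\delta_{i_k,1}=1$, respectively $\delta_{i_k,N_k-1}=1$. Under \eqref{cond for pos def} each block is thus irreducible and weakly diagonally dominant with $\beta_{\*i}^{(k)}>0$, strictly dominant in at least one row; by the standard non-singularity criterion for irreducibly diagonally dominant matrices each block, hence $s_k$, is non-singular.

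For the second claim I would simply substitute definition \eqref{sigmas}. Since $\frac{1}{\tilde\sigma_{\*i}^{(k)}}=\frac12\big(\hat\sigma_{\*i-0.5\*e_k}^{-1}+\hat\sigma_{\*i+0.5\*e_k}^{-1}\big)=6\big(\alpha_{\*i}^{(k)}+\alpha_{\*i+\*e_k}^{(k)}\big)$ by \eqref{oper sk 2}, we obtain $\beta_{\*i}^{(k)}=\frac{1}{\tilde\sigma_{\*i}^{(k)}}-\alpha_{\*i}^{(k)}-\alpha_{\*i+\*e_k}^{(k)}=5\big(\alpha_{\*i}^{(k)}+\alpha_{\*i+\*e_k}^{(k)}\big)$, as asserted. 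Each block is then symmetric with positive diagonal equal to five times the sum of its positive off-diagonals, hence strictly diagonally dominant; by Gershgorin all its eigenvalues are positive, so the block is positive definite. As the inner-product weights are uniform, $(s_k w,w)_{H_{\*h}}$ is a positive multiple of the sum of the block quadratic forms, so $s_k$ is positive definite (in particular non-singular) in $H_{\*h}$.

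The algebra identifying the dominance inequalities with \eqref{cond for pos def} is routine; the one step deserving care is the treatment of the boundary rows $i_k=1,N_k-1$, where, because the nodes on $\partial\omega_{\*h}$ are excluded from $H_{\*h}$, the restricted block loses one off-diagonal entry. It is exactly this loss that the Kronecker symbols in \eqref{cond for pos def} record, and overlooking it would impose a spuriously strong requirement there. Invoking irreducible diagonal dominance rather than plain Gershgorin is equally essential in the first claim, since weak dominance alone yields only semi-definiteness and not the asserted non-singularity.
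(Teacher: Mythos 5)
Your proof is correct and follows essentially the same route as the paper: self-adjointness read off from the symmetric tridiagonal form \eqref{oper sk 1}, Item 1 by recasting \eqref{cond for pos def} as row diagonal dominance of the irreducible tridiagonal blocks into which $s_k$ decouples on $H_{\*h}$ and invoking the Taussky non-singularity criterion, and Item 2 by the elementary computation $\beta_{\*i}^{(k)}=5\big(\alpha_{\*i}^{(k)}+\alpha_{\*i+\*e_k}^{(k)}\big)$ followed by strict diagonal dominance. The only difference is that your explicit reduction yields the dominance inequality $\beta_{\*i}^{(k)}\geq(1-\delta_{i_k,1})\alpha_{\*i}^{(k)}+(1-\delta_{i_k,N_k-1})\alpha_{\*i+\*e_k}^{(k)}$, which is the form that actually expresses row dominance of the restricted blocks, whereas the paper's terser proof records the equivalent form with coefficients $2-\delta$ in place of $1-\delta$ (an apparent slip that your more careful bookkeeping of the boundary rows avoids).
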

\begin{proof}
The self-adjointness of $s_k$ follows from formula \eqref{oper sk 1}.
\par Inequality \eqref{cond for pos def} is equivalent to $\beta_{\*i}^{(k)}\geq(2-\delta_{i_k,1})\alpha_{\*i}^{(k)}+(2-\delta_{i_k,N_k-1})\alpha_{\*i+\*e_k}^{(k)}$.
The result of Item 1 follows from the well-known Taussky theorem concerning tridiagonal matrices.
Note that if $\hat{\sigma }=\hat{\sigma }_S$, then $6\hat{\sigma }_{\*i\pm0.5\*e_k}>\sigma _{\*i}$ and thus $\beta_{\*i}^{(k)}>0$.
\par Item 2 is elementary. It implies the diagonal dominance of $s_k$.
\end{proof}

\par Note that, for $\sigma ^{(k)}=\sigma $, if inequality \eqref{cond for pos def} is strict, then 
the operator $s_k$ is positive definite in $H_{\*h}$.
In addition, inequality \eqref{cond for pos def} is valid provided that 
\[
\frac13\leq\frac{\hat{\sigma }_{\*i\pm0.5\*e_k}}{\sigma _{\*i}},\ \ \text{or}\ \
\frac13\leq\frac{1}{\sigma _{\*i}}\min\limits_{x_{k(i_k-1)}\leq x_k\leq x_{k(i_k+1)}}\sigma ,\ \ \text{or}\ \ h_k\frac{1}{\sigma _{\*i}}\max\limits_{x_{k(i_k-1)}\leq x_k\leq x_{k(i_k+1)}}|\partial_k\sigma |\leq\frac23,
\]
for $x_{\*i}\in\omega_{\*h}$,
i.e., for a limited local range in values of $\sigma $ in $x_k$, or for sufficiently small $h_k$.
For $\sigma ^{(k)}=\tilde{\sigma }^{(k)}$, no such conditions are required that is an essential advantage of the latter choice.

\par For the constructed scheme, it can be expected according to the principle of frozen coefficients that the stability condition has the form
\begin{gather}
h_t^2\Big(\frac{1}{h_1^2}+\ldots+\frac{1}{h_n^2}\Big)\leq \ve\beta_{\min}\sigma _{\min}\ \ \text{with some}\ \ 0<\ve<\frac23,
\label{stab cond 1}
\end{gather}
where $0<\beta_{\min}\leq\beta(x)$ and $\sigma _{\min}\leq \sigma (x)$ on $\bar{\Omega}$, since, for stability in the  strong and standard energy norms with respect to the initial data and the free term and the corresponding error bounds of the orders $\mathcal{O}(|\*h|^{3.5})$ and $\mathcal{O}(|\*h|^4)$, condition \eqref{stab cond 1} in the case of variable $\beta(x)$ and  $\sigma (x)={\rm const}$ has recently been proved in \cite{ZL24,ZL25}. 

\par Clearly $\beta_{\min}\geq1/({\sigma _{\max}c_{\max}^2})$, where 
$c(x)\leq c_{\max}$ and $\sigma (x)\leq \sigma _{\max}$ on $\bar{\Omega}$, thus, a simpler though more restrictive stability condition takes the form
\begin{gather}
\nu_{\*h}^2(c,\sigma)= \frac{\sigma _{\max}}{\sigma _{\min}}\nu_{\*h}^2(c)\leq \ve\ \ 
\text{with}\ \ \nu_{\*h}^2(c):=
c_{\max}^2h_t^2\Big(\frac{1}{h_1^2}+\ldots+\frac{1}{h_n^2}\Big),\ \ 
0<\ve<\frac23,
\label{stab cond 2}
\end{gather}
where $\nu_{\*h}(c,\sigma)>0$ and $\nu_{\*h}(c)>0$ are the Courant numbers depending on both $c$ and $\sigma$ and only on $c$.
A similar stability condition was discussed in \cite{LLL23}.
In it, the presence of the spread of values $\sigma_{\max}/\sigma_{\min}$ is not so surprising since the acoustic wave equation \eqref{eq} can be rewritten as $\partial_t^2u=c^2\sigma L(\sigma )u+(c^2\sigma )f$, thus, the change  $\sigma \to\alpha \sigma $, with any $\alpha={\rm const}>0$, leads to the change $f\to\alpha f$ only. 
Similarly, this change
in $\sigma $ leads to the changes $v_k\to\alpha^{-1}v_k$ in equations  \eqref{eq 2 app} and \eqref{2nd ini cond appr 2} for $v_k$, $1\leq k\leq n$, but then in the change $f\to\alpha f$ only in equations \eqref{eq 1 app} and \eqref{2nd ini cond appr} for $v$.
Fortunately, the practical stability conditions arising in computations can be much softer with respect to $\sigma$ than the above theoretical ones, see the next Section. 

\par The main obstacle to prove stability for variable $\sigma$ is that, after eliminating the auxiliary unknowns $v_1,\ldots,v_n$, the difference operators arising in the canonical form of the scheme  are not self-adjoint and cannot be simultaneously symmetrized, cf. \cite{ZL24,ZL25}, while, for difference schemes to solve the second order hyperbolic equations, the existing stability theory is not sufficiently general in this respect.

\section{\normalsize Numerical experiments}
\label{sec:numer experim}
\setcounter{equation}{0}
\setcounter{proposition}{0}
\setcounter{theorem}{0}
\setcounter{remark}{0}
\setcounter{corollary}{0}

In this Section, we present results of three 2D numerical experiments. The code is implemented in \textit{Python 3}, and the plots are drawn with the use of graphical libraries \textit{matplotlib.pyplot} and \textit{plotly.graph\_objects}.
In Examples 1 and 2, the exact solution is known, and we compute the mesh $C$-norm (the uniform norm) and mesh $C^{1,0}$ and $C^1$ seminorms of the error $\rho=u-v$ at $t=t_M=T$:
\begin{gather*}
\|\rho^M\|_{C(\bar{\omega}_h)}:=\max_{x_{\*i}\in\bar{\omega}_h}|\rho_{\*i}^M|,
\\  
|\rho^M|_{C^{1,0}(\bar{\omega}_h)}:=
\max_{k=1,2}\,\max_{x_{\*i}\in\bar{\omega}_h,\,x_k\neq 0}|\bar{\delta}_k\rho_{\*i}^M|,\ \
|\rho^M|_{C^1(\bar{\omega}_h)}:=
\max\big\{|\rho^M|_{C^{1,0}(\bar{\omega}_h)},\,\|\bar{\delta}_t\rho^M\|_{C(\bar{\omega}_h)}\big\},
\end{gather*}
where $\bar{\delta}_k\rho_{\*i}=(\rho_{\*i}-\rho_{\*i-\*e_k})/h_k$.
In all the Examples below, we take $N_1=N_2=N$ and $h_1=h_2=h$.

\smallskip\par \textbf{Example 1}. We first take $\Omega=(0,2)\times (0,2)$, $T=1.2$ and the smooth density and squared speed  of sound 
\[
\sigma (x)=e^{x_1+x_2},\ \ c^2(x)=[1+0.5(x_1^2+x_2^2)]^2.
\]
Note that the spreads in their values over $\bar{\Omega}$, i.e., $\sigma _{\max}/\sigma _{\min}\approx 54.60$ and $(c_{\max}/c _{\min})^2=25$, are high enough.
We choose rather standard exact solution 
\[
u(x_1,x_2,t)=\cos(\sqrt{2}t-x_1-x_2)
\]
of a travelling wave type and compute the data $u_0$, $u_1$, $f$ and $g$ according to it (note that all of them are not identically equal to zero).

\par We consider two versions of choosing the scheme parameters: ($A$)  $\hat{\sigma}=\hat{\sigma}_S$ and $\sigma_{\*i}^{(k)}=\sigma _{\*i}$; 
($B$)~$\hat{\sigma}=\hat{\sigma}_G$ and $\sigma_{\*i}^{(k)}=\tilde{\sigma }_{\*i}^{(k)}$, see formulas \eqref{d simpson}--\eqref{sigmas}.
For version $A$, the values of $\sigma$ at the nodes of the mesh $\bar{\omega}_{hk}$ and in the middle between adjacent nodes in direction $x_k$, $1\leq k\leq n$, are used.
On the contrary, for version $B$, the values of $\sigma$ at the listed points in $x_k$ are not involved.

\par We present errors, ratios of the sequential errors and practical convergence rates 
\[
e(N,M),\ \ r(N,M)=\frac{e(N,M)}{e(N/2,M/2)},\ p(N,M)=\log_2\frac{e(N,M)}{e(N/2,M/2)},
\]
respectively, in the $C(\bar{\omega}_h)$ norm as well as $C^{1,0}(\bar{\omega}_h)$ and $C^1(\bar{\omega}_h)$ seminorms at $t=t_M$.
In this Example, for the chosen values of $N$ and $M$, the Courant numbers are $\nu_{\*h}(c)\approx 1.0607>1$ and $\nu_{\*h}(c,d)\approx 7.8376\gg 1$ (see \eqref{stab cond 2}); nevertheless, computations are stable and demonstrate excellent error values.

\par For versions $A$ and $B$, the results are given in Tables \ref{Example_1_version_A} and \ref{Example_1_version_B}.
The original value $N=5$ is small and the corresponding step $h=0.4$ is rough.
Notice that both the versions demonstrate very small level of the errors even for rough meshes, the ratios of sequential errors are mainly close to 16 and the practical convergence rates are close to 4 in the all chosen norm and seminorms (more close as $N$ and $M$ grow).
Naturally, the errors $e_{C^{1,0}}$ and $e_{C^1}$ are larger than $e_C$, and also the initial values of $r_{C^{1,0}}$ and $r_{C^1}$ are less close to 16 than $r_C$.
The difference in the results between versions $A$ and $B$ is not so essential.
\begin{table}[ht]
\centering
\caption{\textbf{Example 1}. Errors, error ratios and practical convergence rates in the $C$ norm and $C^{1,0}$ and $C^1$ seminorms for version $A$ of the scheme parameters.}
\label{Example_1_version_A}
\vskip 3pt
\begin{tabular}{rrccccccccc}
\hline
\noalign{\smallskip}
$N$ & $M$ 
& $e_{C}$ & $r_{C}$ & $p_{C}$ 
& $e_{C^{1,0}}$& $r_{C^{1,0}}$ & $p_{C^{1,0}}$ 
& $e_{C^1}$ & $r_{C^1}$ & $p_{C^1}$  \\
\noalign{\smallskip}
\hline
\noalign{\smallskip}
5	&20	    &1.439e-4	&$-$	&$-$	&2.729e-4	&$-$	&$-$	&3.906e-4	&$-$	&$-$\\
10	&40	    &9.499e-6	&15.14	&3.921	&2.211e-5	&12.34	&3.626	&2.786e-5	&14.02	&3.809\\
20	&80	    &6.017e-7	&15.79	&3.981	&1.530e-6	&14.46	&3.854	&1.777e-6	&15.68	&3.971\\
40	&160	&3.779e-8	&15.92	&3.993	&9.943e-8	&15.38	&3.943	&1.113e-7	&15.96	&3.996\\
80	&320	&2.366e-9	&15.97	&3.998	&6.389e-9	&15.56	&3.960	&6.866e-9	&16.21	&4.019\\
\hline
\end{tabular}
\end{table}
\begin{table}[ht]
\centering
\caption{\textbf{Example 1}. Errors, error ratios and practical convergence rates in the $C$ norm and $C^{1,0}$ and $C^1$ seminorms for version $B$ of the scheme parameters.}
\label{Example_1_version_B}
\vskip 3pt
\begin{tabular}{rrccccccccc}
\hline
\noalign{\smallskip}
$N$ & $M$ 
& $e_{C}$ & $r_{C}$ & $p_{C}$ 
& $e_{C^{1,0}}$& $r_{C^{1,0}}$ & $p_{C^{1,0}}$ 
& $e_{C^1}$ & $r_{C^1}$ & $p_{C^1}$  \\
\noalign{\smallskip}
\hline
\noalign{\smallskip}
 5&20   &1.617e-4  &$-$    &$-$    &3.984e-4  &$-$	   &$-$	   &1.060e-3  &$-$    &$-$\\	
10&40	&1.119e-5	&14.45	&3.853	&3.265e-5	&12.20	&3.609	&7.493e-5	&14.14	&3.822\\	
20&80	&7.175e-7	&15.60	&3.964	&2.310e-6	&14.14	&3.821	&4.715e-6	&15.89	&3.990\\	
40&160	&4.482e-8	&16.01	&4.001	&1.539e-7	&15.01	&3.908	&2.935e-7	&16.07	&4.006\\	
80&320	&2.803e-9	&15.99	&3.999	&9.935e-9	&15.49	&3.953	&1.835e-8	&16.00	&4.000\\	
\hline
\end{tabular}
\end{table}

\smallskip\par \textbf{Example 2}. We take the same $\Omega$, $T$ and $u(x,t)$ but the different density and the squared speed  of sound 
\[
\sigma=\sigma(x_1)=[1.25+0.75\tanh(b_\sigma(x_1-1))]^{-1},\ \  c^2=c^2(x_1)=2.5-1.5\tanh(b_c(x_1-1))
\]
with the smoothed jumps at $x_1=1$. 
We set $b_\sigma=20$ and $b_c=1000$, and the jump in $c^2$  is very steep, see Figure \ref{graphs of sigma and c2}(a).
\begin{figure}[tbh!]
\begin{minipage}{0.5\textwidth}
\center{\includegraphics[width=1\linewidth]{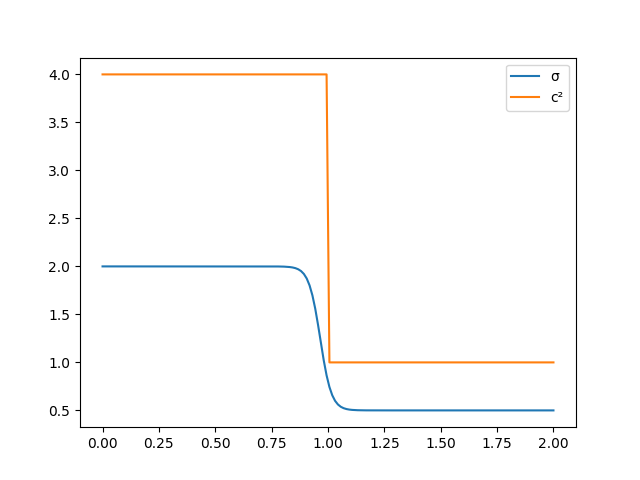}} (a) \\
\end{minipage}
\hfill
\begin{minipage}{0.5\textwidth}
\center{\includegraphics[width=1\linewidth]{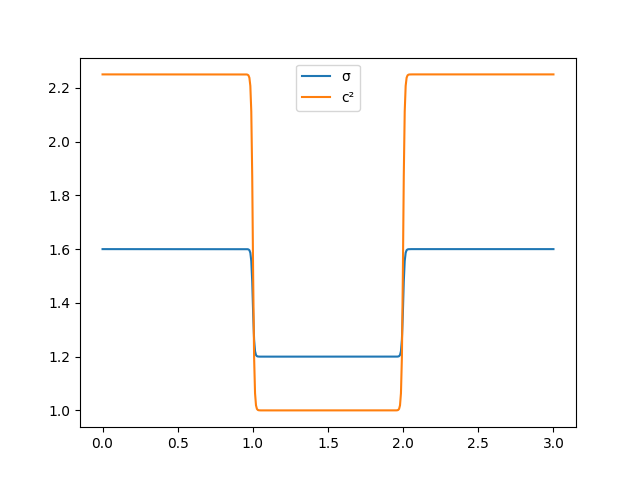}} (b) \\
\end{minipage}
\label{graphs of sigma and c2}
\caption{Graphs of $\sigma(x_1)$ and $c^2(x_1)$ involved in: (a) Example 2; (b) Example 3}
\end{figure}

\par In this Example, for our values of $N$ and $M$, $\nu_{\*h}(c)\approx 0.8485$ but $\nu_{\*h}(c,d)\approx 1.6970>1$; nevertheless, computations remain stable.
Once again we choose the above versions $A$ and $B$ of the scheme parameters. The respective numerical results are shown in Tables \ref{Example_2_version_A} and \ref{Example_2_version_B}.
The original value $N=10$ is taken too rough, and the corresponding errors are large, but the further behaviour of the errors as $N$ grows is interesting and different from Example 1. For the next value $N=20$, the practical convergence rates are higher than 2 though less than 3. 
But, for the further values $N=40$ and $80$, $p_C$ is much higher than 4 and close to 6, and it becomes very close to 4 for the next $N=160$.
Concerning $p_{C^{1,0}}$ and $p_{C^1}$, they are also higher than 4 for at least one of $N=40$ and $80$.

\par We emphasize that for the error behaviour, the rate of smoothing of $\sigma$ is definitive since $c^2$ is practically discontinuous for the chosen meshes.
The phenomenon of the 4th order error behaviour for discontinuous $c^2$ is not elementary at all and needs more theoretical investigation since, in formulas \eqref{eq 1 app} and \eqref{2nd ini cond appr} of the scheme, the second order difference operator $\Lambda(\sigma)$ is applied to the term including the multiplier $\frac{1}{\beta}=\sigma c^2$.

\par The difference in the results between versions $A$ and $B$ is not so significant once again though the behaviour of $r_C$, $e_C$, $r_{C^1}$ and $p_{C^1}$ is generally more regular for the latter version.
\begin{table}[ht]
\centering
\caption{\textbf{Example 2}. Errors, error ratios and practical convergence rates in the $C$ norm and $C^{1,0}$ and $C^1$ seminorms for version $A$ of the scheme parameters.}
\label{Example_2_version_A}
\vskip 3pt
\begin{tabular}{rrcccccccccc}
\hline
\noalign{\smallskip}
$N$ & $M$ 
& $e_{C}$ & $r_{C}$ & $p_{C}$ 
& $e_{C^{1,0}}$& $r_{C^{1,0}}$ & $p_{C^{1,0}}$ 
& $e_{C^1}$ & $r_{C^1}$ & $p_{C^1}$  \\
\noalign{\smallskip}
\hline
\noalign{\smallskip}
10&20  &4.171e-1&    $-$     &$-$   &$\hspace{-16pt}$1.161    &$-$&   $-$     &$\hspace{-16pt}$1.161   &$-$        &$-$\\
20&40  &6.885e-2	&6.058	&2.599	&2.236e-1	&5.193	&2.377	&2.296e-1	&5.058	&2.339\\
40&80  &1.149e-3	&59.93	&5.905	&1.101e-2	&20.31	&4.344	&1.784e-2	&12.87	&3.686\\
80&160 &1.346e-5	&85.37	&6.416	&5.355e-4	&20.56	&4.362	&8.262e-4	&21.59	&4.433\\
160&320&8.356e-7	&16.11	&4.009	&3.872e-5	&13.83	&3.790	&3.872e-5	&21.34	&4.415\\
\hline
\end{tabular}
\end{table}
\begin{table}[ht]
\centering
\caption{\textbf{Example 2}. Errors, error ratios and practical convergence rates in the $C$ norm and $C^{1,0}$ and $C^1$ seminorms for version $B$ of the scheme parameters.}
\label{Example_2_version_B}
\vskip 3pt
\begin{tabular}{rrcccccccccc}
\hline
\noalign{\smallskip}
$N$ & $M$ 
& $e_{C}$ & $r_{C}$ & $p_{C}$ 
& $e_{C^{1,0}}$& $r_{C^{1,0}}$ & $p_{C^{1,0}}$ 
& $e_{C^1}$ & $r_{C^1}$ & $p_{C^1}$  \\
\noalign{\smallskip}
\hline
\noalign{\smallskip}
10&20  &4.223e-1&    $-$     &$-$   &$\hspace{-16pt}$1.151    &$-$&   $-$     &$\hspace{-16pt}$1.151   &$-$        &$-$\\
20 &40	&6.858e-2	&6.157	&2.622	&2.202e-1	&5.229	&2.387	&2.202e-1	&5.229	&2.387\\
40 &80	&1.086e-3	&63.17	&5.981	&7.712e-3	&28.55	&4.836	&1.336e-2	&16.47	&4.042\\
80 &160	&1.984e-5	&54.73	&5.774	&6.047e-4	&12.75	&3.673	&6.554e-4	&20.39	&4.350\\
160&320	&1.229e-6	&16.15	&4.013	&4.544e-5	&13.31	&3.734	&4.544e-5	&14.42	&3.850\\
\hline
\end{tabular}
\end{table}

\smallskip\par \textbf{Example 3}. In this example, we study the acoustic wave propagation in the three-layer-type medium in $\Omega=(-0.7,3.7)\times (-0.7,3.7)$
for $T=1.15$
and take the density and squared speed of sound in the form
\begin{gather*}
\sigma=\sigma(x_1)=1.6-0.2\,[\tanh\big(b_\sigma(x_1-1)\big)-\tanh\big(b_\sigma(x_1-2)\big)],
\\
c^2=c^2(x_1)=2.25-0.625\,[\tanh\big(b_c(x_1-1))-\tanh\big(b_c(x_1-2)\big)],
\end{gather*}
with the smoothed jumps in the defining densities $\sigma =1.6$, 1.2 and 1.6  
and speeds of sound $c = 1.5$, 1 and 1.5
in the left $-0.7\leq x_1\leq 1$, middle $1\leq x_1\leq 2$ and right $2\leq x_1\leq 3.7$ layers in $x_1$, respectively. 
We also set $b_\sigma=b_c=100$ (physically, these parameters should be equal or close), so the jumps are steep, see Figure \ref{graphs of sigma and c2}(b) where $\sigma$ and $c^2$ are given for $0\leq x_1\leq 3$.

\par We also use the Ricker-type wavelet source function smoothed in space
\[
f(x_1,x_2)=\beta(x_1,x_2)\frac{\gamma}{\pi}e^{-\gamma((x_1-1.5)^2+(x_2-1.5)^2)}\sin(50t)e^{-200t^2},
\]
with $\gamma=1000$, where $(1.5,1.5)$ is the centre of $\Omega$.
Note that then $\frac{\gamma}{\pi}\approx 318.3$ and $\frac{\gamma}{\pi}e^{-\gamma r^2}<5.385\times 10^{-8}$ for $r>0.15$.
The other data are zero: $u_0=u_1=0$ and $g=0$.

\par For definiteness, we apply version A of the scheme parameters and choose $N=480$ and $M=460$; thus, $h=11/1200\approx 9.167\times 10^{-3}$ and $h_t=0.025$. For them, $\nu_{\*h}(c)\approx 0.8678$ and $\nu_{\*h}(c,d)\approx 1.002$; the computations are stable again.

\par Contour levels of wavefields at six sequential characteristic time moments are presented in Fig. \ref{contour figures}.
The corresponding perpendicular central sections of the wavefields, for $x_1=1.5$ and $x_2=1.5$, at four time moments are given in Fig. \ref{dyn at fixed x and y}.
We observe the wavefront generated by the source function expanding in the middle layer and then passing to the left and right layers with the higher speed of sound, together with the internal wavefronts reflected back from both the lines of jumps in $c^2(x)$ and $\sigma(x)$ towards the centre of $\Omega$.
The reflected waves meet at the centre and pass through each other.
The results are given in the same manner and are close in general to those presented in \cite{ZL25}, see also \cite{HLZ19,ZC23}, where similar but discontinuous $c(x)$ and $\sigma(x)\equiv 1$ were taken.
In addition, the 3D graphs of the wavefields at an intermediate and the final time moments are shown in Fig. \ref{dyn3d}. 
Such graphs are absent in \cite{HLZ19,ZC23,ZL24}, although they probably most evidently demonstrate the complex overall structure of the wavefields containing not only moving and reflected wavefronts but moving and incipient narrow peaks as well.
\begin{figure}[tbh!] 
\begin{minipage}{0.5\textwidth}
\center{\includegraphics[width=1\linewidth]{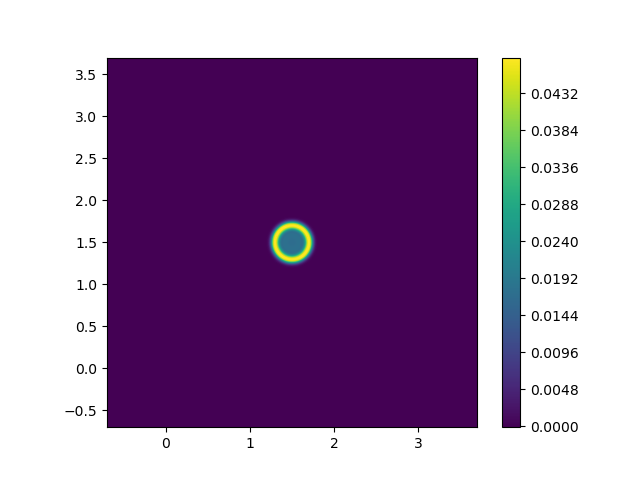}} (a) \\
\end{minipage}
 \hfill
\begin{minipage}{0.5\textwidth}
\center{\includegraphics[width=1\linewidth]{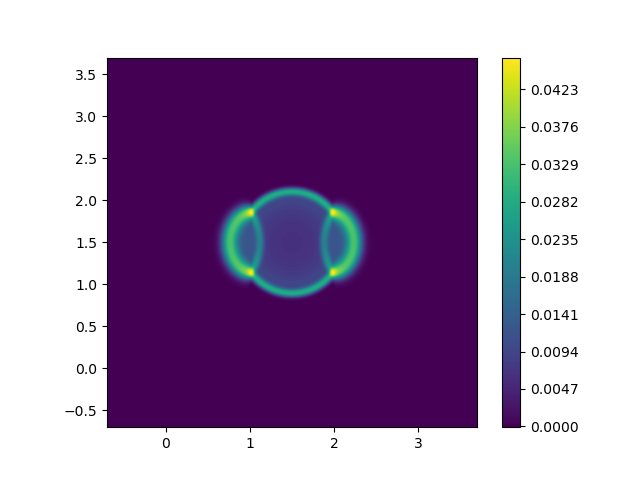}} (b) \\
\end{minipage}
 \vfill
 \begin{minipage}{0.5\textwidth}
\center{\includegraphics[width=1\linewidth]{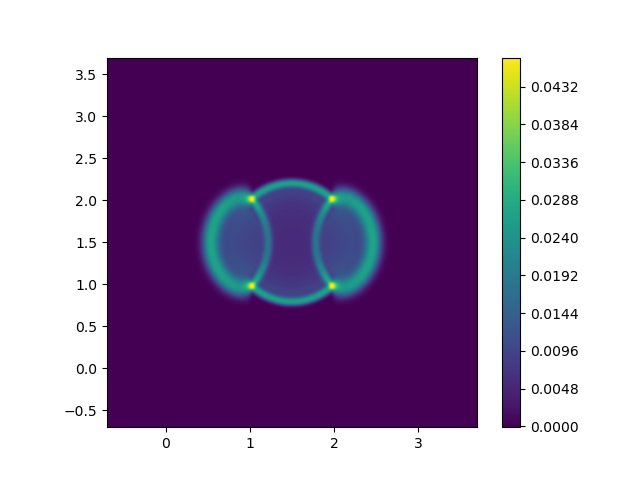}} (c) \\
\end{minipage}
 \hfill
\begin{minipage}{0.5\textwidth}
\center{\includegraphics[width=1\linewidth]{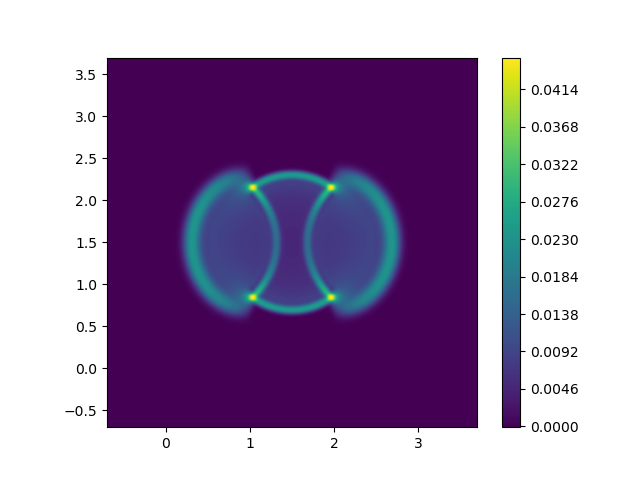}} (d) \\
\end{minipage}
 \vfill
\begin{minipage}{0.5\textwidth}
\center{\includegraphics[width=1\linewidth]{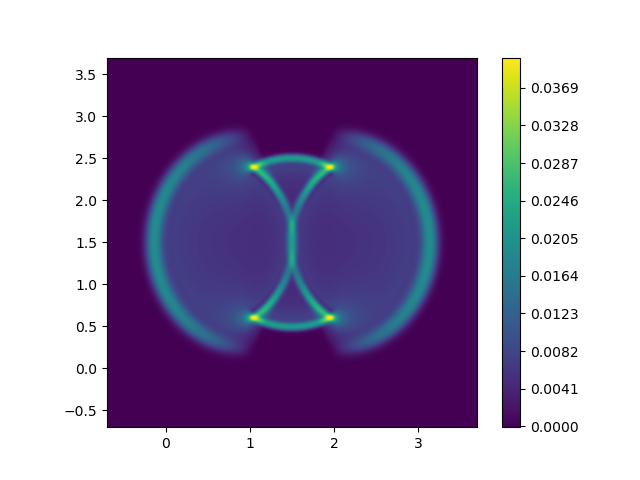}} (e) \\
\end{minipage}
 \hfill
\begin{minipage}{0.5\textwidth}
\center{\includegraphics[width=1\linewidth]{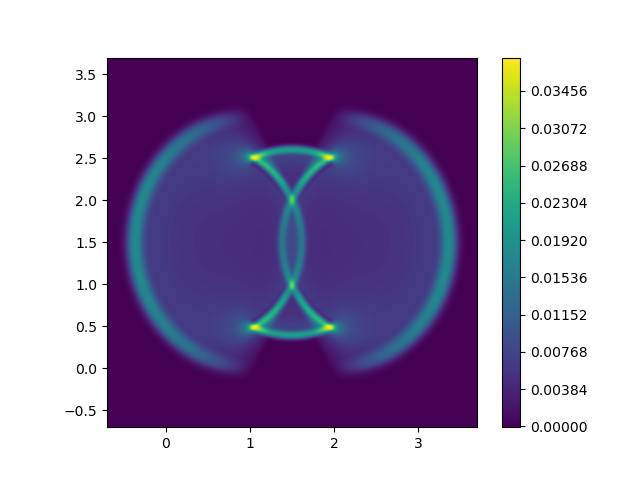}} (f) \\
\end{minipage}
\caption{Example 3. Contour levels of wavefields at several times: (a) $t=0.25$; (b) $t=0.65$; (c) $t=0.75$; (d) $t=0.85$; (e) $t=1.05$; (f) $=1.15$.}
\label{contour figures}
\end{figure}
\begin{figure}[tbh!]
\begin{minipage}{0.5\textwidth}
\center{\includegraphics[width=1\linewidth]{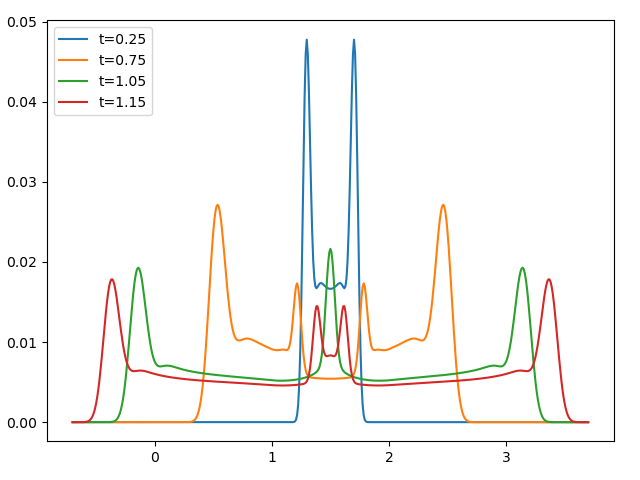}} (a) \\
 \end{minipage}
 \hfill
 \begin{minipage}{0.5\textwidth}
\center{\includegraphics[width=1\linewidth]{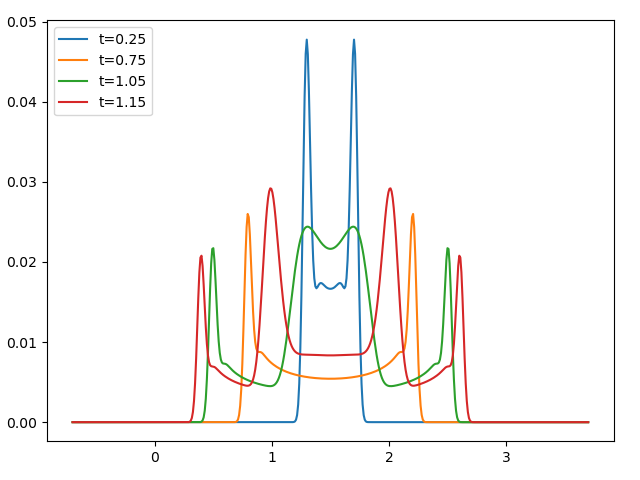}} (b) \\
\end{minipage}
\caption{Example 3. The perpendicular sections of the wavefields at several times: (a) for $x_2=1.5$; (b) for $x_1=1.5$.}
\label{dyn at fixed x and y}
\end{figure}
\begin{figure}[tbh!]
\begin{minipage}{0.5\textwidth}
\center{\includegraphics[width=1\linewidth]{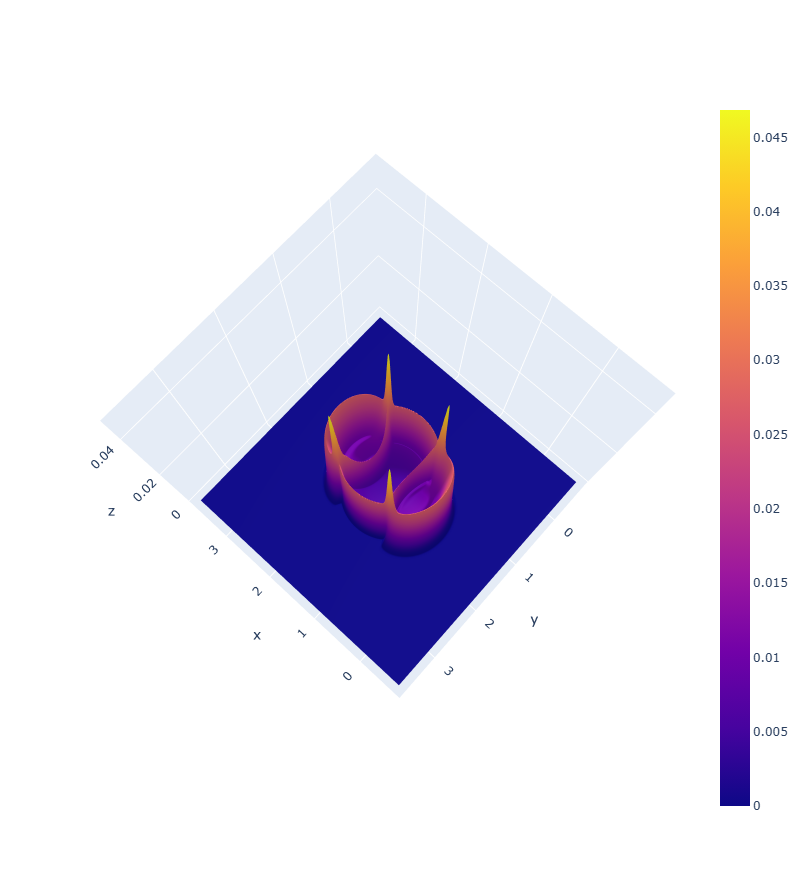}} (a) \\
\end{minipage}
\hfill
\begin{minipage}{0.5\textwidth}
\center{\includegraphics[width=1\linewidth]{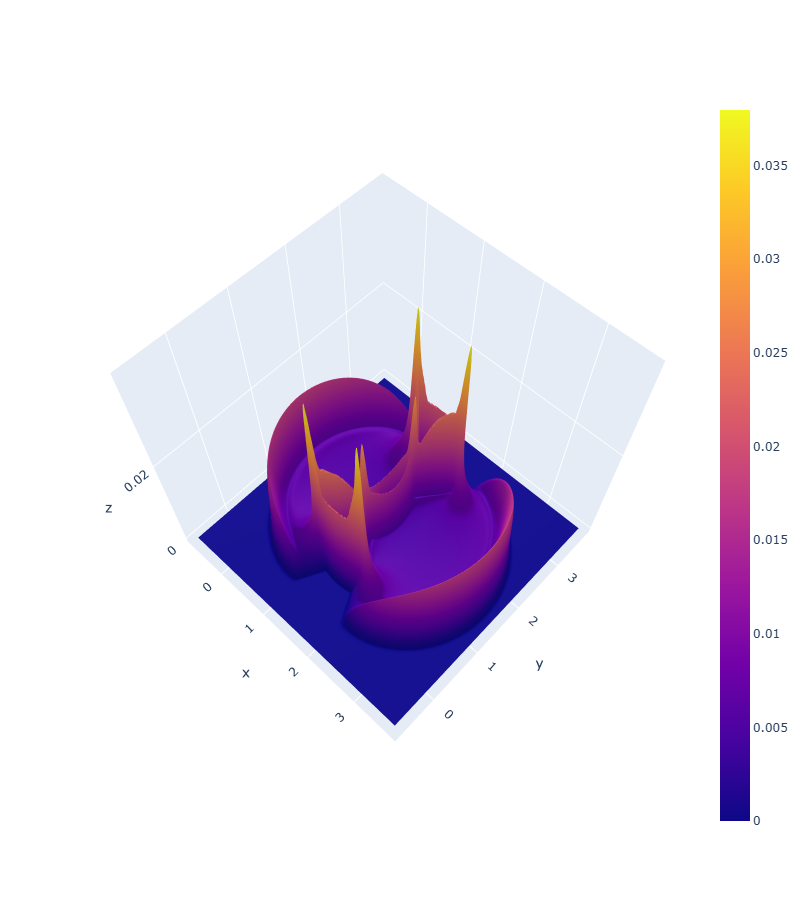}} (b) \\
\end{minipage}
\caption{Example 3. The 3D graphs of the wavefields at: (a) $t=0.75$; (b) $t=1.15$.}
\label{dyn3d}
\end{figure}
\section*{\normalsize Acknowledgments}

Support from the Basic Research Program at the HSE University (Laboratory of Mathematical Methods in Natural Science) is gratefully acknowledged by A. Zlotnik, Sections \ref{sec:intro}--\ref{sec:ibvp and scheme}. Support from the Basic Research Program at the HSE University (International Centre of Decision Choice and Analysis) is gratefully acknowledged by T. Lomonosov, Section~\ref{sec:numer experim}.

\renewcommand{\refname}{\normalsize\rm \textbf{References}}


\begin{thebibliography}{99}
\bibitem{BG12}
\vskip -8pt L.M. Brekhovskikh, O.A. Godin, Acoustics of Layered Media I. Plane and Quasi-plane Waves, Springer, Berlin, 2012.
https://doi.org/10.1007/978-3-642-52369-4
\bibitem{BTT18}
\vskip -8pt S. Britt, E. Turkel, S. Tsynkov,
A high order compact time/space finite difference scheme for the wave equation with variable speed of sound,
J. Sci. Comput. 76(2) (2018) 777--811.
https://doi.org/10.1007/s10915-017-0639-9
\bibitem{BDE21}
\vskip -8pt E. Burman, O. Duran, A. Ern,
Hybrid high-order methods for the acoustic wave equation in the time domain,
Commun. Appl. Math. Comput. 4(2) (2022) 597--633.
https://doi.org10.1007/s42967-021-00131-8
\bibitem{CHFPSH23}
\vskip -8pt L. Chen, J. Huang, L.-Y. Fu, W. Peng, C. Song, J. Han,
A compact high-order finite-difference method with  optimized coefficients for 2d acoustic wave equation, Remote Sens. 15 (2023) article 604. https://doi.org/ 10.3390/rs15030604

\bibitem{CL75}
\vskip -8pt
M. Ciment, S.H. Leventhal, Higher order compact implicit schemes for wave equation, Math. Comput. 29(132) (1975) 985--994.
\bibitem{CFHJSS18}
\vskip -8pt B. Cockburn, Z. Fu, A. Hungria, L. Ji, M.A. S\'{a}nchez, F.-J. Sayas,
Stormer-Numerov HDG methods for acoustic waves,
J. Sci. Comput. 75 (2018) 597--624.
https://doi.org/10.1007/s10915-017-0547-z
\bibitem{С02}
\vskip -8pt G.C. Cohen, Higher-Order Numerical Methods for Transient Wave Equations. Springer, Berlin, 2002.
https://doi.org/10.1007/978-3-662-04823-8 
\bibitem{CJ96}
\vskip -8pt G. Cohen, P. Joly, Construction and analysis of fourth-order finite difference schemes for the acoustic wave equation in non-homogeneous media,
SIAM J. Numer. Anal. 4 (1996) 1266--1302.
https://doi.org/10.1137/S0036142993246445 
\bibitem{DLG14}
\vskip -8pt S. Das, W. Liao, A. Gupta,
An efficient fourth-order low dispersive finite difference scheme for a 2-D acoustic wave equation,
J. Comput. Appl. Math. 258 (2014) 151--167.
http://dx.doi.org/10.1016/j.cam.2013.09.006

\bibitem{F17}
\vskip -8pt V.I. Fedorchuk,
On the invariant solutions of some five-dimensional d'Alembert equations,
J.~Math. Sci. 220(1) (2017) 27--37.
https://doi.org/10.1007/s10958-016-3165-7

\bibitem{JG20}
\vskip -8pt Y. Jiang, Y. Ge, 
An explicit fourth-order compact difference scheme for solving the 2D wave equation, 
Adv. Difference Equat. 415 (2020) 1--14. https://doi.org/10.1186/s13662-020-02870-z
\bibitem{JG23}
\vskip -8pt Y. Jiang, Y. Ge,
An explicit high-order compact difference scheme for the three-dimensional acoustic wave equation with variable speed of sound,
Int. J. Comput. Math. 100(2) (2023) 321--341.
https://doi.org/10.1080/00207160.2022.2118524
\bibitem{HLZ19}
\vskip -8pt B. Hou, D. Liang, H. Zhu,
The conservative time high-order AVF compact finite difference schemes for two-dimensional variable coefficient acoustic wave equations,
J. Sci. Comput. 80 (2019) 1279--1309.
https://doi.org/10.1007/s10915-019-00983-6

\bibitem{LLL23}
\vskip -8pt D. Li, K. Li, W. Liao,
A combined compact finite difference scheme
for solving the acoustic wave equation in
heterogeneous media,
Numer. Meth. Partial Diff. Equat. 39 (2023) 4062--4086.
https://doi.org/10.1002/num.23036
\bibitem{LLL19}
\vskip -8pt K. Li, W. Liao, Y. Lin,
A compact high order alternating direction implicit method for three-dimensional acoustic wave equation with variable coef\-fi\-ci\-ent,
J. Comput. Appl. Math. 361(1) (2019) 113--129.
https://doi.org/10.1016/j.cam.2019.04.013

\bibitem{Liao14}
\vskip -8pt  W. Liao, On the dispersion and accuracy of a compact higher-or\-der difference scheme for 3D acoustic wave equation,
J. Comput. Appl. Math. 270 (2014) 571--583.
https://doi.org/10.1016/j.cam.2013.08.024

\bibitem{LYDH18}
\vskip -8pt W. Liao, P. Yong, H. Dastour, J. Huang,
Efficient and accurate numerical simulation of acoustic wave propagation in a 2D heterogeneous media,
Appl. Math. Comput. 321 (2018) 385--400.
https://doi.org/10.1016/j.amc.2017.10.052

\bibitem{S63}
\vskip -8pt A.A.~Samarskii, Schemes of high-order accuracy for the multi-dimensional heat conduction equation,
USSR Comput. Math. Math. Phys.  3(5) (1963) 1107--1146.
https://doi.org/10.1016/0041-5553(63)90104-6

\bibitem{SA79}
\vskip -8pt A.A. Samarski, V.B. Andr\'{e}iev, M\'{e}todos en Diferencias para las Ecuaciones El\'{i}pticas, Mir, Mosc\'{u}, 1979.

\bibitem{SWK18}
\vskip -8pt S. Schoeder, M. Kronbichler, W.A. Wall,
Arbitrary high-order explicit hybridizable discontinuous Galerkin methods for the acoustic wave  equation,
J. Sci. Comput. 76 (2018) 969--1006.
 https://doi.org/10.1007/s10915-018-0649-2
\bibitem{STT19}
\vskip -8pt F. Smith, S. Tsynkov, E. Turkel,
Compact high order accurate schemes for the three dimensional wave equation,
J. Sci. Comput. 81(3) (2019) 1181--1209.
https://doi.org/10.1007/s10915-019-00970-x
\bibitem{ZArxiv21}
\vskip -8pt A. Zlotnik,
On properties of an explicit in time fourth-order vector compact scheme for the multidimensional wave equation,  Preprint (2021) 1--15. https://arxiv.org/abs/2105.07206
\bibitem{ZC22}
\vskip -8pt A. Zlotnik, R. \v{C}iegis,
On higher-order compact ADI schemes for the variable coefficient wave equation,
Appl. Math. Comput. 412 (2022) article 126565.
https://doi.org/10.1016/j.amc.2021.126565
\bibitem{ZC23}
\vskip -8pt A. Zlotnik, R. \v{C}iegis,
On construction and properties of compact 4th order finite-difference schemes for the variable coefficient wave equation,
J. Sci. Comput. 95(1) (2023) article 3.
https://doi.org/10.1007/s10915-023-02127-3
\bibitem{ZK21}
\vskip -8pt A. Zlotnik, O. Kireeva,
On compact 4th order finite-difference schemes for the wave equation,
Math. Model. Anal. 26(3) (2021) 479--502.
https://doi.org/10.3846/mma.2021.13770
\bibitem{ZL24}
\vskip -8pt A. Zlotnik, T. Lomonosov,  On stability and error bounds of an explicit in time higher-order vector compact scheme for the multidimensional wave and acoustic wave equations, Appl. Numer. Math. {195} (2024) 54--74.
https://doi.org/10.1016/j.apnum.2023.09.006
\bibitem{ZL25}
\vskip -8pt A. Zlotnik, T. Lomonosov, On a semi-explicit fourth-order vector compact scheme for the acoustic wave equation, Russ. J. Numer. Anal.  Math. Model. {40} (1) (2025) 71--88. 
https://doi.org/10.1515/rnam-2025-0006
\end{thebibliography}
\end{document}